\newtheorem{Theorem}{Theorem}
\newtheorem{Corollary}[Theorem]{Corollary}
\newtheorem{proposition}{Proposition}[section]
\newtheorem{lemma}[proposition]{Lemma}
\newtheorem{theorem}[proposition]{Theorem}
\theoremstyle{definition}
\def \<#1>{{\left\langle{#1}\right\rangle}}
\def\abs#1{\left\vert{#1}\right\vert}
\def\set#1{{\def\st{\;:\;}\left\{#1\right\}}}
\def\NN{\mathbb N}
\def\ZZ{\mathbb Z}
\def\Z-{\overline{\mathbb Z}}
\def\Zl-{\overline{\mathbb Z}_\ell}
\def\Q-{\overline{\mathbb Q}}
\def\Ql-{\overline{{\mathbb Q}_\ell}}
\def\K-{\overline{K}}
\def\Fl-{\overline{{\mathbb F}_\ell}}
\def\FF{\mathbb F}
\DeclareMathOperator{\St}{St}
\DeclareMathOperator{\RiSt}{RiSt}
\DeclareMathOperator{\Aut}{Aut}
\DeclareMathOperator{\Sym}{Sym}
\numberwithin{equation}{section}
\title{Torsion elements in branch pro-$p$ groups}
\author{Jorge Fariña-Asategui and Santiago Radi}
\address{Jorge Fariña-Asategui: Centre for Mathematical Sciences, Lund University, 223 62 Lund, Sweden -- Department of Mathematics, University of the Basque Country UPV/EHU, 48080 Bilbao, Spain}
\email{jorge.farina\_asategui@math.lu.se}
\address{Santiago Radi: Department of Mathematics, Texas A\&M University, 77843 Co-llege Station, U.S.A.
}
\email{santiradi@tamu.edu}
\keywords{Torsion elements, branch groups, Haar measure, pro-$p$ groups}
\subjclass[2020]{Primary: 20E08, 20E18; Secondary: 28C10}
\thanks{The first author is supported by the Spanish Government, grant PID2020-117281GB-I00, partly with FEDER funds. The first author also acknowledges support from the Walter Gyllenberg Foundation from the Royal Physiographic Society of Lund. The second author is supported by Grigorchuk's Simons Foundation Grant MP-TSM-00002045 and the department of Mathematics of Texas A\&M University.}
\begin{document}
\maketitle

\begin{abstract}
We show that the set of torsion elements of a topological group admitting a branch pro-$p$ quotient has Haar measure zero.
\end{abstract}

\section{introduction}
\label{section: introduction}

In 1902, Burnside proposed the following question known as the general Burnside problem: if a group is finitely generated and torsion, must the group be finite? In the same article, Burnside raised the same question for groups with uniformly bounded exponent, known as the bounded Burnside problem. Finally, in 1940, the restricted Burnside problem was formulated in \cite{Grun}: Given $m, n \in \NN$, are there finitely many finite groups with $m$ generators and exponent $n$?

The general Burnside problem was answered negatively by Golod in 1964, who constructed a counterexample by using the Golod-Shafarevich inequality; see either \cite{GolodShafarevich} or \cite[Theorem 7.21]{Koch2002}. The bounded Burnside problem was also answered negatively by Adian and Novikov in \cite{NovAdi68}. However, the restricted Burnside problem was solved positively by Zelmanov in \cite{RestBurnsideproblem}.

The counterpart of the general Burnside problem for compact groups was asked later by Platonov in the Kourovka Notebook \cite{Kourovka}. Using the classification of finite simple groups, Wilson reduced Platonov's question to the pro-$p$ case in \cite{WilsonProfinite}. Finally the pro-$p$ case was solved by Zelmanov in \cite{ZelmanovPro-p} who obtained the following positive result:

\begin{theorem}[see {\cite[Theorem 1]{ZelmanovPro-p}}]
\label{theorem: Zelmanov}
Every torsion pro-$p$ group is locally finite. 
\end{theorem}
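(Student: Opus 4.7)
The plan is to reduce the statement to the finiteness of topologically finitely generated torsion pro-$p$ groups and then invoke Zelmanov's earlier solution to the restricted Burnside problem. Since a topological group is locally finite precisely when every topologically finitely generated closed subgroup is finite, and any such subgroup of a torsion pro-$p$ group is itself a topologically finitely generated torsion pro-$p$ group, this reduction is immediate; the task becomes to show that a topologically finitely generated torsion pro-$p$ group $G$ must be finite.

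I would next upgrade pointwise torsion to bounded exponent. Writing $G=\bigcup_{n\geq 1}G[p^n]$ with $G[p^n]=\set{g\in G \st g^{p^n}=1}$ closed, the Baire category theorem forces some $G[p^n]$ to contain an open coset $x_0U$, and after intersecting its finitely many conjugates one may take $U$ to be an open normal subgroup of $G$. A commutator calculation---typically via the Hall--Petrescu collection formula applied in the nilpotent quotients $G/\gamma_c(G)$---then converts the condition $(x_0u)^{p^n}=1$ for $u\in U$ into a uniform bound on the $p$-power orders of elements of $U$; since $[G:U]$ is finite, this yields a bounded exponent for $G$ itself, with some controlled loss in the exponent.

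With $G$ now topologically finitely generated of bounded exponent, I would apply Zelmanov's solution to the restricted Burnside problem: every finitely generated residually finite group of bounded exponent is finite. A pro-$p$ group is by construction residually a finite $p$-group, so $G$ must be finite, as desired.

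The only genuinely deep ingredient is the restricted Burnside theorem itself, and this is the main obstacle; everything else in the plan is formal topology and routine $p$-group commutator calculus. Its proof proceeds through a Magnus--Lazard correspondence: one associates to $G$ a graded Lie $\FF_p$-algebra using the Zassenhaus $p$-series, shows that the exponent hypothesis forces both a polynomial identity and an ad-nilpotence condition on this Lie algebra, and then invokes Zelmanov's theorem that a finitely generated PI Lie algebra all of whose elements are ad-nilpotent is nilpotent. Finiteness of $G$ is then recovered from nilpotency of its associated Lie algebra by standard means. Without this heavy Lie-theoretic input there seems to be no elementary route.
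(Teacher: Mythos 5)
This statement is not proved in the paper at all: it is quoted verbatim from Zelmanov's article \cite{ZelmanovPro-p} as an external black box, so there is no ``paper's own proof'' to compare against. Judged on its own merits, your sketch contains a genuine gap at the second step. The implication you describe --- from ``some coset $x_0U$ of an open normal subgroup $U$ has exponent dividing $p^n$'' to ``$U$, and hence $G$, has bounded exponent'' --- is not a routine Hall--Petrescu computation, and as a formal implication it is simply false: in the pro-$2$ group $\ZZ_2\rtimes C_2$ with the inverting action, the nontrivial coset of $\ZZ_2$ consists entirely of involutions, yet $\ZZ_2$ is torsion-free. So any such upgrade would have to use the global torsion hypothesis in an essential way, and no elementary argument of this kind is known. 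Indeed, if torsion plus the Baire-category coset did yield bounded exponent by commutator calculus, the compact Burnside problem would reduce immediately to the restricted Burnside problem, which is emphatically not how Zelmanov's proof goes.

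What Zelmanov actually does is bypass bounded exponent entirely. After reducing to a topologically finitely generated torsion pro-$p$ group $G$, he forms the graded Lie $\FF_p$-algebra $L_p(G)$ from the Zassenhaus filtration; the torsion hypothesis makes every homogeneous element ad-nilpotent (since $g^{p^k}=1$ forces $(\operatorname{ad}\bar g)^{p^k}=0$), while the Baire-category coset of bounded exponent is used only to show, via the Wilson--Zelmanov theorem on identities for Lie algebras of pro-$p$ groups, that $L_p(G)$ satisfies a polynomial identity. His Lie-theoretic nilpotency theorem (the same engine behind the restricted Burnside problem, but invoked with these two separately sourced hypotheses rather than via the group-theoretic RBP statement) then gives that $L_p(G)$ is nilpotent, whence $G$ is $p$-adic analytic by Lazard's theory; a torsion $p$-adic analytic pro-$p$ group is finite because it has a torsion-free uniform open subgroup. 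Your final paragraph correctly identifies the Lie machinery, but it enters the compact case in this modified form, not through a prior reduction to bounded exponent followed by a literal application of the restricted Burnside theorem.
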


In other words, \cref{theorem: Zelmanov} says that an infinite topologically finitely generated pro-$p$ group $G$ cannot be torsion. Still, a pro-$p$ group is compact, it has a unique normalized Haar measure, and we wonder whether the set of torsion elements can be large, i.e. whether it can have positive measure. In this direction, our main result is the following:

\begin{Theorem}
\label{Theorem: torsion}
Let $G$ be a branch pro-$p$ group and $\mu$ the Haar measure on $G$. Let $F$ be the set of torsion elements in~$G$. Then
$$\mu(F)=0.$$
\end{Theorem}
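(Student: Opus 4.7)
The plan is to show, by induction on $m\geq 0$, that the closed set $A_m:=\{g\in G:g^{p^m}=1\}$ has Haar measure zero; since every torsion element of a pro-$p$ group has $p$-power order, $F=\bigcup_{m\geq 0}A_m$ and the theorem follows by countable additivity. The base case $m=0$ is $A_0=\{1\}$, of measure zero because $G$ is infinite (any branch group is) and hence $\mu$ is non-atomic.

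For the inductive step, assume $\mu(A_{m-1})=0$ and suppose, towards a contradiction, that $\mu(A_m)>0$. Then $\mu(A_m\setminus A_{m-1})>0$, so by the Lebesgue differentiation theorem applied to the neighborhood basis $\{\RiSt_G(n)\}_{n\geq 1}$ of the identity (a basis because $\bigcap_n\RiSt_G(n)\subseteq\bigcap_n\St_G(n)=\{1\}$ by faithfulness on the tree), there is some $g\in A_m\setminus A_{m-1}$ that is a point of density $1$ for $A_m$:
\[
\lim_{n\to\infty}\frac{\mu(A_m\cap g\,\RiSt_G(n))}{\mu(g\,\RiSt_G(n))}=1.
\]
I would then show this ratio equals $0$ for all sufficiently large $n$, yielding the contradiction. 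Since $g^{p^{m-1}}\neq 1$ but $g^{p^m}=1$, for all $n$ beyond some threshold the permutation $\sigma_g$ that $g$ induces on the level-$n$ vertex set $L_n$ has order exactly $p^m$ in the finite $p$-group $G/\St_G(n)$; its cycle lengths are $p$-powers with least common multiple $p^m$, so there is at least one cycle $C$ of length exactly $p^m$.

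The branch structure gives $\RiSt_G(n)=\prod_{v\in L_n}\RiSt_G(v)$, so an element of $\RiSt_G(n)$ is a tuple $u=(u_v)_v$. Expanding $(gu)^{p^m}=g^{p^m}\cdot u^{g^{p^m-1}}\cdots u^gu=u^{g^{p^m-1}}\cdots u^gu$ (using $g^{p^m}=1$), the $v$-component of $(gu)^{p^m}$ for any $v\in C$ becomes a single word in the $p^m$ coordinates $\{u_w:w\in C\}$, each appearing exactly once, conjugated by a section of a power of $g$. Setting this word equal to $1$ is one equation in $\prod_{w\in C}\RiSt_G(w)$ whose solution set, after fixing $|C|-1$ of the coordinates, consists of a single element in the last $\RiSt_G(v)$. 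Because $\RiSt_G(v)$ is infinite (being conjugate to a factor of the infinite group $\RiSt_G(n)$, which has finite index in $G$), its Haar measure is non-atomic, so this constraint has measure zero, and hence $\mu(A_m\cap g\,\RiSt_G(n))=0$ by Fubini, contradicting density $1$ at $g$ and closing the induction.

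The main obstacle is the precise bookkeeping of the conjugation twists in the cyclic word: the sections $g|_w$ intertwine distinct $\RiSt_G(w)$'s via measure-preserving isomorphisms, and one must carefully check that after fixing $|C|-1$ coordinates the resulting equation pins the last coordinate down to a single element of $\RiSt_G(v)$ rather than to a positive-measure subset.
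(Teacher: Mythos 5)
Your proposal is correct, and the bookkeeping you flag at the end does work out: writing $N=p^m$, the $v$-section of $(gu)^{N}=u^{g^{N-1}}\dotsb u^{g}u$ is $\prod_{i}\bigl((g^i)|_v\bigr)^{-1}u|_{g^i(v)}(g^i)|_v$, and since $v$ lies in a $\sigma_g$-cycle of length exactly $N$ the vertices $g^i(v)$ are pairwise distinct, so for fixed $\{u_w\}_{w\in C\setminus\{v\}}$ the equation has the form $c\,u_v=1$ and its fibre is a single point. However, your route is genuinely different from the paper's. You decompose $F$ by exponent, $F=\bigcup_m A_m$ with $A_m=\{g: g^{p^m}=1\}$, and argue by induction via density points for the coset filtration $\{\RiSt_G(n)\}_n$, Fubini on the product decomposition $\RiSt_G(n)=\prod_{w\in\mathcal{L}_n}\mathrm{rist}_G(w)$, and non-atomicity of the Haar measure of the infinite compact group $\mathrm{rist}_G(v)$. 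The paper instead decomposes $F$ as $\bigcup_r\bigcup_n\bigcap_{k\ge n}\pi_k^{-1}(P_{r,n}(k))$, where $P_{r,n}(k)$ consists of elements of $\pi_k(G)$ whose order $r$ is already attained at level $n$, and then counts in the finite quotients: an upper bound for $\#P_{r,n}(k)$ (one section determined by the others), the lower bound $|\pi_k(G)|\ge|\pi_k(\RiSt_G(n))|$, and \cref{lemma_estimation_number_of_sections} to compare $|\pi_{k-n}(G_v)|$ with $|\pi_{k-n}(\RiSt_G(n)_v)|$, letting $k\to\infty$. The combinatorial heart is identical in both proofs --- a full-length orbit on $\mathcal{L}_n$ forces the section at one of its vertices to be determined by the sections at the others, and the rigid vertex stabilizer there is infinite --- but your measure-theoretic packaging replaces the paper's quantitative estimates; you avoid \cref{lemma_estimation_number_of_sections} entirely, at the price of invoking the martingale/density theorem, which is in fact avoidable (for each $g\in A_m\setminus A_{m-1}$ you produce a coset $g\RiSt_G(n(g))$ meeting $A_m$ in a null set, and countably many cosets cover $A_m\setminus A_{m-1}$). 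If you write this up, make explicit that $\RiSt_G(n)$ is open (closed of finite index) and contained in $\St_G(n)$, so these cosets form a refining generating filtration, and that the internal direct product $\prod_{w}\mathrm{rist}_G(w)$ of closed commuting subgroups is topologically the external one by compactness, so its Haar measure is indeed the product measure.
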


As a corollary of \cref{Theorem: torsion}, we obtain:

\begin{Corollary}
\label{Corollary: onto branch groups}
Let $G$ be a pro-$p$ group admitting a branch pro-$p$ quotient. If $F$ is the set of torsion elements of $G$ and $\mu$ the Haar measure on $G$, then $$\mu(F) = 0.$$
\end{Corollary}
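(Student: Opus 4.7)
The plan is to deduce the Corollary directly from \cref{Theorem: torsion} via a short pushforward-of-measure argument. Let $\pi : G \twoheadrightarrow Q$ be a continuous surjective homomorphism from $G$ onto a branch pro-$p$ group $Q$, and write $\mu_G, \mu_Q$ for the normalized Haar measures on $G$ and $Q$ respectively. The first observation is that torsion is preserved by homomorphisms: if $g \in G$ satisfies $g^n = 1$ for some $n$, then $\pi(g)^n = 1$, so $\pi(g) \in F_Q$. Consequently $F \subseteq \pi^{-1}(F_Q)$.

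Next I would show that $F_Q$ (and hence $\pi^{-1}(F_Q)$) is a Borel set, so that it is $\mu_Q$-measurable and $\mu_G$-measurable. This is immediate because for each $n \in \NN$ the map $g \mapsto g^n$ on $Q$ is continuous, so the set $\{g \in Q : g^n = 1\}$ is closed; then
\[
F_Q = \bigcup_{n \geq 1} \{g \in Q : g^n = 1\}
\]
is an $F_\sigma$ set, and the same holds for $F \subseteq G$.

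The core step is identifying the pushforward $\pi_* \mu_G$ on $Q$. Since $\pi$ is continuous and surjective between compact groups, $\pi_* \mu_G$ is a Borel probability measure on $Q$ that is left-invariant (as $\mu_G$ is left-invariant and $\pi$ is a homomorphism). By uniqueness of the normalized Haar measure on the compact group $Q$, we conclude $\pi_* \mu_G = \mu_Q$. Combining this with \cref{Theorem: torsion} gives
\[
\mu(F) \leq \mu\bigl(\pi^{-1}(F_Q)\bigr) = (\pi_* \mu_G)(F_Q) = \mu_Q(F_Q) = 0,
\]
which is the desired conclusion. There is no real obstacle here: the only point requiring a line of justification is the uniqueness-of-Haar-measure identification of $\pi_*\mu_G$ with $\mu_Q$, which is standard for continuous surjections between compact groups.
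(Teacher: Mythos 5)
Your argument is essentially identical to the paper's proof: both deduce the corollary from \cref{Theorem: torsion} by noting that the set of torsion elements of $G$ is contained in the preimage of the set of torsion elements of the branch quotient, and identifying the pushforward of $\mu$ with the Haar measure of the quotient via uniqueness of the normalized Haar measure. Your additional observation that $F$ is an $F_\sigma$ set (hence measurable) is a small point the paper leaves implicit, but the route is the same.
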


Branch groups were introduced by Grigorchuk in 1997 at the St. Andrew's group theory conference in Bath and first appeared in print in \cite{Grigorchuk2000}. Branch groups arise in several areas of mathematics such as automata theory, fractal geometry, dynamical systems, topology or probability; see \cite{Self_similar_groups}. Fruitful connections with Galois theory, via the Fontaine-Mazur Conjecture and arboreal representations, arose as soon as branch groups were defined, and they are still an active area of research recently discussed during the workshop "Groups of dynamical origin" in 2024 in Pasadena \cite{AIM}; see \cite{NewHorizonsBoston, BostonJones} and the recent solution of Boston's conjecture by the first author in \cite{JorgeSpectra}. An extended discussion on branch groups can be found in \cite{bartholdi2005branch}. 

There have been several results about branch groups in the last two decades, including studies of maximal subgroups \cite{Anitha, Bondarenko, Dominik, DominikAlejandra, Pervova_Maximal_subgroup}, the block structure of finitely generated subgroups \cite{francoeur2024structure}, subgroups of finite index \cite{GarridoWilson}, among others. 

Recall that an infinite abstract group is just-infinite if every proper quotient is finite and an infinite profinite group is just-infinite if every proper quotient by a closed normal subgroup is finite. In 2000, Grigorchuk observed in \cite[Theorem 3, page 146]{Grigorchuk2000}, based on the results in \cite{Wilson1972}, that just-infinite branch groups constitute one of the three classes of just-infinite groups in the case of abstract groups and one of the two disjoint classes in the case of profinite just-infinite groups. On the other hand, it is observed in \cite[Proposition 3]{Grigorchuk2000} that every topologically finitely generated pro-$p$ group can be mapped onto a just-infinite pro-$p$ group, so \cref{Corollary: onto branch groups} applies to a wide family of topologically finitely generated pro-$p$ groups.

In \cref{sec_Example} we provide an example of a topologically finitely generated pro-$p$ group where the measure of its set of torsion elements is non-zero. This group has a normal subgroup of finite index that is isomorphic to a finite number of copies of the $p$-adic integers $\ZZ_p$, and thus it is $p$-adic analytic. This example shows that \cref{Theorem: torsion} does not hold in the class of $p$-adic analytic pro-$p$ groups. This leads us to post the following question:

\subsection*{Question} 

If a topologically finitely generated pro-$p$ group is not $p$-adic analytic, must the Haar measure of its torsion elements be zero?

\subsection*{Remark}

When the result was obtained, we were informed that the result stated in \cref{Theorem: torsion} follows from the result of Abert in \cite[Corollary 1.4]{abert2003group} and in fact his result works for a larger class of groups known as weakly branch groups, that constitute a generalization of branch groups. Abert's strategy is based on separating actions and uses a probabilistic argument that could be useful for other investigations or studies. Our proof for the case of branch pro-$p$ groups is shorter and more elementary than the one of Abert.

\subsection*{Acknowledgements} We would like to thank Rostislav Grigorchuk for suggesting the problem and for helpful discussions. The first author would also like to thank Texas A\&M University for its warm hospitality while this work was being carried out.

\section{Preliminaries}
\label{section: Preliminaries}

\subsection*{\textit{\textmd{Notation}}} We shall write $H\le_f G$ to indicate $H$ is a finite-index subgroup of $G$. The action on the tree will be considered to be a left action, i.e. the composition $gh$ acts on $T$ from right to left. The order of an element $g\in G$ is denoted~$o(g)$. The cardinality of a set $S$ is denoted $\# S$ whereas the cardinality of a group $G$ is denoted~$|G|$.

\subsection{Faithful actions on rooted trees}
A \textit{spherically homogeneous rooted tree}~$T$ is a tree with a root $\emptyset$, where the vertices at the same distance from the root have all the same number of descendants. The vertices at distance exactly $n\ge 0$ from the root form the \textit{$n$th level of $T$}, denoted $\mathcal{L}_n$, and the ones at distance at most $n$ from the root form the \textit{$n$th truncated tree of $T$}, denoted $T^n$. For any vertex $v\in T$ the subtree rooted at $v$, which is again a spherically homogeneous rooted tree, is denoted $T_v$. The group of graph automorphisms of $T$ is denoted $\mathrm{Aut}~T$. Note that this definition is also valid if $T$ is replaced by any of the previously defined trees.

Given a vertex $v\in T$ we write $\mathrm{st}(v)\le \mathrm{Aut}~T$ for the stabilizer of the vertex $v$ and define $\mathrm{St}(n)\le \mathrm{Aut}~T$ via $\mathrm{St}(n)=\bigcap_{v\in \mathcal{L}_n}\mathrm{st}(v)$ for $n\ge 1$. Note that the $n$th level stabilizer $\mathrm{St}(n)$ is a normal subgroup of $\mathrm{Aut}~T$.

For any $1\le k\le \infty$ and $v\in T$ we define the section of depth $k$ at $v$ of an automorphism $g\in \mathrm{Aut}~T$ as the unique automorphism $g|_v^k$ of the truncated tree $T_v^k$ (for $k=\infty$ it is simply $T_v$) such that for any $w\in T_v^k$
$$g(vw)=g(v)g|_v^k(w).$$
Sections satisfy the following equality:
\begin{align}
\label{align: property of sections}
    (gh)|_v^k=(g|_{h(v)}^k)(h|_v^k).
\end{align}

For every $k\ge n\ge 1$ we define the isomorphism $$\psi_n^k:\mathrm{Aut}~T^k\to (\mathrm{Aut}~T_{v_1}^{k-n}\times\dotsb\times \mathrm{Aut}~T_{v_{N_n}}^{k-n})\rtimes \mathrm{Aut}~T^n$$
via
$$g\mapsto (g|_{v_1}^{k-n},\dotsc,g|_{v_{N_n}}^{k-n}) g|_\emptyset^n,$$
where $v_1,\dotsc,v_{N_n}$ are all the distinct vertices in $\mathcal{L}_n$.

Let us fix now a subgroup $G\le \mathrm{Aut}~T$. The group $G$ is said to be \textit{level-transitive} if $G$ acts transitively on $\mathcal{L}_n$ for all $n\ge 1$. We define $\mathrm{st}_G(v):=\mathrm{st}(v)\cap G$ and $\mathrm{St}_G(n):=\mathrm{St}(n)\cap G$ for any $v\in T$ and any $n\ge 1$. Furthermore, for $v\in T$, we define the \textit{rigid vertex stabilizer} $\mathrm{rist}_G(v)$ as the subgroup of $G$ consisting of automorphisms which fix $v$ and every vertex not in $T_v$. It is immediate from the definition that for $v\ne w$ at the same level of $T$ their rigid vertex stabilizers $\mathrm{rist}_G(v)$ and $\mathrm{rist}_G(w)$ have trivial intersection and commute with each other. Thus, for any $n\ge 1$ we may define the \textit{$n$th rigid level stabilizer} $\RiSt_G(n)\le G$ as the direct product 
$$\RiSt_G(n):=\prod_{v\in \mathcal{L}_n}\mathrm{rist}_G(v).$$
Note that if $G$ is level-transitive then the rigid vertex stabilizers of vertices at the same level are conjugate in $G$ and therefore 
\begin{align}
\label{align: rist is conjugate}
\RiSt_G(n)=\prod_{g\in G}\mathrm{rist}_G(v)^g.
\end{align}

A group $G\le \mathrm{Aut}~T$ is said to be \textit{branch} if $G$ is level-transitive and for every $n\ge 1$ the rigid level stabilizer $\RiSt_G(n)$ is of finite index in $G$; see \cite{Grigorchuk2000}. A group $G$ is said to be \textit{weakly branch} if it is level-transitive and $\RiSt_G(n) \neq 1$ for all $n \in \NN$.

For $S\subseteq \mathrm{Aut}~T$ we define $\pi_k(S_v)\subseteq \mathrm{Aut}~T_v^k$ via
$$\pi_k(S_v):=\{s|_v^k~:~ s\in S\}.$$
If $v$ is taken to be the root we may drop the $v$ from the notation and simply write~$\pi_k(S)$. The following is a straightforward observation that will be useful in the proof of \cref{Theorem: torsion}:

\begin{lemma}
\label{lemma: properties of rists}
Let $G\le \mathrm{Aut}~T$. Then:
\begin{enumerate}[\normalfont(i)]
    \item for any $v\in \mathcal{L}_n$ we have $\pi_{k}(\RiSt_G(n)_v)=\pi_{k}(\mathrm{rist}_G(v)_v)$;
    \item for any $k\ge n\ge 1$ we have $\pi_k(\RiSt_G(n))=\prod_{w\in \mathcal{L}_n}\pi_{k-n}(\RiSt_G(n)_w)$;
    \item for any $k\ge n\ge 1$ and $v\in \mathcal{L}_n$ we have $|\pi_k(\mathrm{rist}_G(v))|=|\pi_{k-n}(\mathrm{rist}_G(v)_v)|$.
\end{enumerate}
\end{lemma}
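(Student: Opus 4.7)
The lemma collects three bookkeeping statements about how depth-truncated sections interact with rigid stabilizers, and I would prove them in the order given. The unifying observation is that an element $g \in \mathrm{rist}_G(v)$ with $v \in \mathcal{L}_n$ fixes every vertex outside $T_v$, so its entire restriction $g|_\emptyset^k$ (for $k \ge n$) is determined by the datum $g|_v^{k-n}$ acting on the subtree hanging from $v$.

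For (i), I would exploit the definition $\RiSt_G(n) = \prod_{w \in \mathcal{L}_n} \mathrm{rist}_G(w)$ as an internal direct product. Any $g \in \RiSt_G(n)$ decomposes as $g = \prod_{w \in \mathcal{L}_n} g_w$ with $g_w \in \mathrm{rist}_G(w)$; since $g_w$ acts as the identity on every vertex outside $T_w$, repeated application of \eqref{align: property of sections} gives $g|_v^k = g_v|_v^k$ for each $v \in \mathcal{L}_n$. This shows $\pi_k(\RiSt_G(n)_v) \subseteq \pi_k(\mathrm{rist}_G(v)_v)$, and the reverse inclusion is immediate from $\mathrm{rist}_G(v) \le \RiSt_G(n)$.

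For (ii), I would feed the decomposition $\RiSt_G(n) = \prod_{w \in \mathcal{L}_n} \mathrm{rist}_G(w)$ through the isomorphism $\psi_n^k$. Every element of $\RiSt_G(n)$ lies in $\mathrm{St}(n)$, so its image under $\psi_n^k$ sits in the direct-product factor $\prod_{w \in \mathcal{L}_n} \mathrm{Aut}~T_w^{k-n}$, with the $\mathrm{Aut}~T^n$-component trivial. Because the factors $\mathrm{rist}_G(w)$ commute and have pairwise disjoint support, their contributions land in distinct coordinates, giving $\pi_k(\RiSt_G(n)) = \prod_{w \in \mathcal{L}_n} \pi_{k-n}(\mathrm{rist}_G(w)_w)$; rewriting the factors with (i) yields the claim.

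For (iii), I would write down the obvious map
\[
\Phi : \pi_k(\mathrm{rist}_G(v)) \longrightarrow \pi_{k-n}(\mathrm{rist}_G(v)_v), \qquad g|_\emptyset^k \longmapsto g|_v^{k-n},
\]
and check that it is a bijection. Surjectivity holds by definition of the right-hand side. For injectivity, suppose $g, h \in \mathrm{rist}_G(v)$ satisfy $g|_v^{k-n} = h|_v^{k-n}$; since both $g$ and $h$ act trivially on $T^k \setminus T_v^{k-n}$ and agree on $T_v^{k-n}$, we conclude $g|_\emptyset^k = h|_\emptyset^k$, so $\Phi$ is well-defined and injective.

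I do not anticipate any genuine obstacle: each item is a direct unpacking of the definitions of rigid stabilizer, of section, and of the isomorphism $\psi_n^k$. The only delicate point is keeping the indices straight when composing a depth-$k$ section at the root with depth-$(k-n)$ sections at level-$n$ vertices; once one commits to the convention $g|_v^k \in \mathrm{Aut}~T_v^k$, everything falls into place.
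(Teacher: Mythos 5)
Your proposal is correct: all three items are verified exactly as one would expect, using the disjoint supports of the $\mathrm{rist}_G(w)$ to get $g|_v^k=g_v|_v^k$ for (i), pushing the internal direct product through $\psi_n^k$ for (ii), and the bijection $g|_\emptyset^k\mapsto g|_v^{k-n}$ for (iii). The paper itself gives no proof --- it labels the lemma a ``straightforward observation'' --- and your argument supplies precisely the routine verification that is omitted.
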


\subsection{Pro-$p$ groups and the Haar measure}

The automorphism group $\Aut~T$ is isomorphic to the inverse limit $\varprojlim \pi_n(\Aut~T)$. Thus $\Aut~T$ is a profinite group with respect to the \textit{congruence topology}, i.e. the topology where the subgroups $\St_G(n)$ form an open basis of neighborhoods of the identity. In particular $\Aut~T$ is Hausdorff and compact as a topological group. If $G$ is a closed subgroup of $\Aut~T$, then $G$ is compact and we can associate to $G$ a normalized left-invariant Haar measure $\mu$ in the Borel $\sigma$-algebra of $G$, i.e., a measure $\mu$ such that $\mu(G) = 1$ and for any $g \in G$ and any $S$ measurable subset of $\Aut~T$, we have $\mu(gS) = \mu(S)$. Moreover, for any measurable subset $S\subseteq G$ its Haar measure $\mu(S)$ is given by 
\begin{align}
\label{align: def haar measure}
\mu(S)=\lim_{k\to \infty}\frac{\#\pi_k(S)}{|\pi_k(G)|}.
\end{align}

A group $G$ is said to be a \textit{pro-$p$} group if it is isomorphic to an inverse limit of finite $p$-groups. A topological group is called \textit{topologically finitely generated} if it contains a dense finitely generated subgroup. A topologically finitely generated group is said to have \textit{finite rank} if every closed subgroup is topologically generated by at most $r$ elements for some $r\ge 1$. A topological group $G$ is \textit{$p$-adic analytic} if $G$ admits a $p$-adic analytic manifold structure such that the product and the inversion maps are analytic. These two concepts are linked by the following classical result:

\begin{proposition}[{see {\cite[Theorems 3.13 and 8.2]{padic}}}]
\label{proposition: finite rank}
For a finitely generated pro-$p$ group $G$ the following are equivalent:
\begin{enumerate}[\normalfont(i)]
    \item $G$ is $p$-adic analytic;
    \item $G$ has finite rank;
    \item $G$ has a powerful open subgroup.
\end{enumerate}
\end{proposition}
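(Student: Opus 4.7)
The plan is to prove the three equivalences via the cyclic chain (iii)$\Rightarrow$(i)$\Rightarrow$(ii)$\Rightarrow$(iii), with the Lazard correspondence between powerful pro-$p$ groups and finite-dimensional Lie algebras over $\QQ_p$ as the central tool. Since being $p$-adic analytic, having finite rank, and containing a powerful open subgroup are each invariant under passage to open subgroups of finite index, we may freely replace $G$ by such a subgroup whenever convenient. The heavy lifting concentrates in the last implication.

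For (iii)$\Rightarrow$(i), after replacing $G$ by a powerful open subgroup $H$, one constructs mutually inverse logarithm and exponential maps between $H$ and a finite-dimensional $\QQ_p$-Lie algebra $L$; the convergence of the Campbell--Baker--Hausdorff series in the coordinates supplied by $\log$ then makes multiplication and inversion $p$-adic analytic on $H$, and the manifold structure extends back to $G$ via its finite coset decomposition. For (i)$\Rightarrow$(ii), one attaches to the $p$-adic analytic group $G$ its Lie algebra $L(G)$, a finite-dimensional $\QQ_p$-vector space of some dimension $d$. Every closed subgroup $K\le G$ is again $p$-adic analytic and inherits a Lie subalgebra $L(K)\hookrightarrow L(G)$; a chart of the first kind then realises an open uniform subgroup of $K$ as topologically generated by $\dim L(K)\le d$ elements, and only boundedly many additional generators are needed to cover the finite component group, yielding a uniform rank bound independent of $K$.

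The main obstacle is (ii)$\Rightarrow$(iii): given a uniform bound $r$ on the number of topological generators of every closed subgroup, one has to manufacture a powerful open subgroup of $G$. The plan is to search inside the descending $p$-central series $P_n(G)$. A Hall--Petresco-type identity together with the finite rank condition controls the obstruction to powerfulness in $P_n(G)$---the image of $[P_n(G),P_n(G)]$ in $P_n(G)/P_n(G)^p$---by iterated commutators whose weight depends only on $r$. Since each Frattini quotient $P_n(G)/\Phi(P_n(G))$ has $\FF_p$-dimension at most $r$, these obstructions descend through the series and must vanish at some finite stage, producing a powerful open $P_n(G)$ (with $p$ replaced by $4$ when $p=2$). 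The delicate point, and the real use of the full strength of the finite rank hypothesis, is tracking the interaction between commutators and $p$-th powers precisely enough to force this vanishing; this is where any careful write-up has to spend most of its technical effort.
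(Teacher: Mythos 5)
The paper does not prove this proposition at all: it is imported verbatim from Dixon--du Sautoy--Mann--Segal (the cited Theorems 3.13 and 8.2), and the only place it is used is in Section 4, where the easy direction (an abelian, hence powerful, open subgroup forces finite rank and analyticity) is invoked. So there is no in-paper argument to measure your proposal against; the relevant comparison is with the textbook proof you are implicitly reconstructing.

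Judged on those terms, your cycle (iii)$\Rightarrow$(i)$\Rightarrow$(ii)$\Rightarrow$(iii) is the standard architecture, and the first two legs are described essentially correctly, modulo two points you should not gloss over: the $\log$/$\exp$ correspondence requires a \emph{uniform} open subgroup, not merely a powerful one (powerful open subgroups can have torsion), and the fact that a closed subgroup of a $p$-adic analytic group is again analytic is itself a substantial theorem rather than a freebie. The genuine gap is in (ii)$\Rightarrow$(iii). The assertion that the commutator obstructions ``descend through the series and must vanish at some finite stage'' because each $P_n(G)/\Phi(P_n(G))$ has dimension at most $r$ is not an argument: boundedness of these dimensions does not by itself force $[P_n(G),P_n(G)]\le P_n(G)^p$ for large $n$, and you supply no mechanism that would. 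The standard proof (DDMS, Theorem 2.13 for finite $p$-groups, followed by an inverse-limit argument) works differently: for a normal subgroup $N$ of rank at most $r$, the quotient $G/C_G(N/\Phi(N))$ embeds in $\GL_r(\FF_p)$, whose Sylow $p$-subgroups have derived length bounded by a function of $r$ alone; iterating this produces a \emph{characteristic powerful subgroup of index bounded in terms of $r$}, and it is that uniform index bound which survives the passage to the inverse limit. Without this (or an equivalent quantitative input) your final implication does not close.
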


For our purposes it is enough to know that topologically finitely generated abelian pro-$p$ groups are powerful.

Finally, an infinite topological group is called \textit{just-infinite} if every proper quotient by a closed normal subgroup is finite.

\section{Proof of the main result}
\label{section: proof of the main result}

We need the following lemma, which states that a finite-index subgroup $H\le_f G$ has roughly the same number of sections of depth $k\ge 1$ as $G$ at a fixed vertex $v\in T$.

\begin{lemma}
\label{lemma_estimation_number_of_sections}
Let $H \le_f G \leq \mathrm{Aut}~T$, a natural number $n\ge 1$ and $v\in \mathcal{L}_n$. Then for all $k \ge 1$ we have
$$|\pi_k(G_v)| \leq \#\mathcal{L}_n \cdot|G:H| \cdot |\pi_k(H_v)|.$$
\end{lemma}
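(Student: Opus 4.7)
My plan is to parametrize the elements of $G$ via a left transversal of $H$ and read off the desired bound from the multiplicativity identity \eqref{align: property of sections}. Fix a left transversal $g_1, \dots, g_m$ of $H$ in $G$, where $m = |G:H|$, so every $g \in G$ admits a unique decomposition $g = g_i h$ with $i \in \{1, \dots, m\}$ and $h \in H$. Applying \eqref{align: property of sections}, one would then write
\begin{align*}
    g|_v^k = (g_i h)|_v^k = \bigl(g_i|_{h(v)}^k\bigr)\bigl(h|_v^k\bigr),
\end{align*}
so the section $g|_v^k$ is completely determined by the triple $(i,\, h(v),\, h|_v^k)$.

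The key observation is that $h$ preserves the levels of $T$, so $h(v) \in \mathcal{L}_n$, while $h|_v^k$ lies in $\pi_k(H_v)$ by definition. Consequently the map $g \mapsto g|_v^k$ factors through the finite set
\begin{align*}
    \{1, \dots, m\} \times \mathcal{L}_n \times \pi_k(H_v),
\end{align*}
whose cardinality is $m \cdot \#\mathcal{L}_n \cdot |\pi_k(H_v)| = |G:H| \cdot \#\mathcal{L}_n \cdot |\pi_k(H_v)|$. This yields exactly the claimed inequality.

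The one piece of bookkeeping to watch is that a generic $g \in G$ need not stabilise $v$, so the section $g|_v^k$ naturally lives in $\mathrm{Aut}~T_{g(v)}^k$ and must be identified with an element of $\mathrm{Aut}~T_v^k$ via the canonical isomorphism coming from spherical homogeneity (and similarly for $g_i|_{h(v)}^k$). Once that identification is fixed, the argument reduces to a finite counting over cosets of $H$ and vertices at level $n$, and I do not foresee any genuine obstacle. Note in passing that the bound is somewhat loose: only vertices in the $H$-orbit of $v$ actually arise as $h(v)$, but enlarging to all of $\mathcal{L}_n$ is harmless and keeps the statement clean.
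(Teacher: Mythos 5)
Your proof is correct and follows essentially the same route as the paper: decompose $G$ into left cosets $g=sh$ of $H$, apply the section identity \eqref{align: property of sections} to get $g|_v^k=(s|_{h(v)}^k)(h|_v^k)$, and count the possible pairs (coset representative, image vertex $h(v)\in\mathcal{L}_n$) times $|\pi_k(H_v)|$. The paper phrases the count as a bound on $\#\{s|_w^k : s\in S,\ w\in\mathcal{L}_n\}$ rather than as a factorization through a product set, but this is the same argument.
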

\begin{proof}
By assumption, there exists a finite set $S$ such that $G=\bigsqcup_{s\in S}sH$ and $\#S=|G:H|<\infty$.
Thus any $g\in G$ may be written as $g=sh$, where $s\in S$ and $h\in H$. Hence by \cref{align: property of sections}, for any $k\ge 1$ we get
$$g|_v^k=(sh)|_v^k=(s|_{h(v)}^k)(h|_v^k).$$
Therefore
\begin{align*}
    |\pi_k(G_v)|&\le \#\{s|_w^k~:~ s\in S\text{ and }w\in \mathcal{L}_n\}\cdot |\pi_k(H_v)|\\
&\le \#\mathcal{L}_n \cdot |G:H|\cdot |\pi_k(H_v)|.\qedhere
\end{align*}
\end{proof}

Now we are in position to prove \cref{Theorem: torsion}, namely that the set of torsion elements in a branch pro-$p$ group has Haar measure zero.

\begin{proof}[Proof of \cref{Theorem: torsion}]
An element $g\in G$ has order $r\ge 1$ if and only if there exists a level $n\ge 1$ such that $g|_\emptyset^k\in \pi_k(G)$ has order $r$ for every $k\ge n$. Therefore the set~$F$ of torsion elements in $G$, admits the following decomposition:
\begin{align}
\label{align: first decomposition of S}
F&= \bigcup_{r=1}^\infty \set{g \in G~:~  o(g) = r}=\bigcup_{r=1}^\infty \bigcup_{n=1}^\infty \bigcap_{k=n}^\infty \set{g \in G~:~ o(g|_\emptyset^k) = o(g|_\emptyset^n)=r}. 
\end{align}
For $k\ge n\ge 1$ and $r\ge 1$, let us define the set $P_{r,n}(k)\subseteq \pi_k(G)$ via
$$P_{r,n}(k) = \set{h \in \pi_k(G)~:~  o(h) = o(h|^n_\emptyset) = r}.$$
Then, \cref{align: first decomposition of S} may be read as
\begin{align*}
F= \bigcup_{r = 1}^\infty \bigcup_{n = 1}^\infty \bigcap_{k = n}^\infty \pi_k^{-1}(P_{r,n}(k)).    
\end{align*}
Moreover, as $o(\pi_k(h))$ divides $o(\pi_{k+1}(h))$, we observe that 
$$\pi_{k+1}^{-1}(P_{r,n}(k+1)) \subseteq \pi_k^{-1}(P_{r,n}(k))$$
for every $k\ge n\ge 1$ and $r\ge 1$. Hence, by countable subadditivity and continuity from above of the measure $\mu$ we obtain
\begin{align*}
\mu(S) \leq \sum_{r = 1}^\infty \sum_{n = 1}^\infty \mu \left(\bigcap_{k = n}^\infty \pi_k^{-1}(P_{r,n}(k)) \right) = \sum_{r = 1}^\infty \sum_{n = 1}^\infty \lim_{k \rightarrow \infty} \mu \left( \pi_k^{-1}(P_{r,n}(k)) \right).
\end{align*}
Thus, it is enough to prove that 
$$\lim_{k \rightarrow \infty} \mu \left( \pi_k^{-1}(P_{r,n}(k)) \right)=0.$$
By \cref{align: def haar measure} we have
\begin{align}
\label{align: haar measure of Pnk}
\mu \left( \pi_k^{-1}(P_{r,n}(k)) \right) = \frac{\# P_{r,n}(k)}{\abs{\pi_k(G)}}.
\end{align}

Our goal is to find an upper bound for \cref{align: haar measure of Pnk} in terms of $k$ such that it tends to 0 as $k$ tends to infinity. To this end let us first find an upper bound for the numerator $\# P_{r,n}(k)$.

Let $h \in P_{r,n}(k)$. Let us write $\tau:=h|^n_\emptyset \in \pi_n(G)$, which is of order $r$, and $h_v:=h|_v^{k-n} \in \pi_{k-n}(G_v)$ for every $v\in \mathcal{L}_n$ to simplify notation. As $h \in P_{r,n}(k)$, we have $h^r=1$ and for every vertex $v\in \mathcal{L}_n$ we get the equality
\begin{align}
\label{ec_sections_h^r}
1=h^r|_v^{k-n} =(h|_{\tau^{r-1}(v)}^{k-n})\dotsb (h|_{\tau(v)}^{k-n})(h|_v^{k-n}) = \prod_{i=1}^{r} h_{\tau^{r-i}(v)}.
\end{align}

The action of $\pi_n(G)$ on $\mathcal{L}_n$ is faithful so
$$r=o(\tau)=\mathrm{lcm}\{\# \mathrm{orb}_{\langle \tau\rangle}(v)~:~ v\in \mathcal{L}_n\}.$$
Moreover, since $\langle \tau\rangle \le \pi_n(G)$ is a finite $p$-group, all its orbits on $\mathcal{L}_n$ are of length a $p$-power. Hence the lcm of the lengths of the orbits is simply the maximum. Thus, there exists a vertex $v_0\in \mathcal{L}_n$ where this maximum is attained, i.e.
$$\#\mathrm{orb}_{\langle \tau\rangle}(v_0)=r.$$
This implies that the product in \cref{ec_sections_h^r} for $v=v_0$ is indexed by a set of $r$ vertices where each vertex appears exactly once. Therefore we may rearrange terms to obtain an expression of the form
$$h_{v_0}=h_{v_1}^{-1}\dotsb h_{v_{r-1}}^{-1},$$
where $v_1,\dotsc,v_{r-1}$ are $r-1$ distinct vertices in $\mathcal{L}_n\setminus\{v_0\}$. In other words $h_{v_0}$ is completely determined by $\{h_v\}_{v\in \mathcal{L}_n\setminus\{v_0\}}$. Thus applying the isomorphism $\psi_n^k$ to $P_{r,n}(k)$ yields the upper bound
\begin{align}
\label{align: upper bound}
\#P_{r,n}(k)=\#\psi_n^k(P_{r,n}(k)) \leq \abs{\pi_n(G)} \prod_{v \in 
\mathcal{L}_n \setminus \set{v_0}} \abs{\pi_{k-n}(G_v)}.
\end{align}

Now let us give a lower bound for the denominator in \cref{align: haar measure of Pnk}. Since $\RiSt_G(n)\le G$, we get
\begin{align}
\label{align: lower bound}
\abs{\pi_k(G)} \geq \abs{\pi_k(\RiSt_G(n))} = \prod_{v \in \mathcal{L}_n} \abs{\pi_{k-n}(\RiSt_G(n)_v)},
\end{align}
where the last equality is precisely \cref{lemma: properties of rists}\textcolor{teal}{(ii)}.

Plugging in \cref{align: haar measure of Pnk} both the bounds in \cref{align: upper bound} and in \cref{align: lower bound} yields
\begin{align*}
\mu(\pi_k^{-1}(P_{r,n}(k))) \leq \abs{\pi_n(G)} \Bigg( \prod_{v \in 
\mathcal{L}_n\setminus \set{v_0}}  \frac{ \abs{\pi_{k-n}(G_v)}}{\abs{\pi_{k-n}(\RiSt_G(n)_v)}} \Bigg) \abs{\pi_{k-n}(\RiSt_G(n)_{v_0})}^{-1}.
\end{align*}

Since $G$ is branch, the normal subgroup $\RiSt_G(n)$ is of finite index in $G$. Therefore we may apply \cref{lemma_estimation_number_of_sections} to obtain
\begin{align*}
\mu(\pi_k^{-1}(P_{r,n}(k)) &\leq \abs{\pi_n(G)} \Bigg( \prod_{v \in \mathcal{L}_n \setminus \set{v_0}} \#\mathcal{L}_n\cdot  |G:\RiSt_G(n)| \Bigg)\cdot \abs{\pi_{k-n}(\RiSt_G(n)_{v_0})}^{-1}\\
&=\alpha(n)\cdot \abs{\pi_{k-n}(\RiSt_G(n)_{v_0})}^{-1},
\end{align*}
where $\alpha(n)$ is a constant depending only on $n$.

Again, since $G$ is branch $\RiSt_G(n)$ is infinite. Thus, by \cref{align: rist is conjugate} each $\mathrm{rist}_G(v)$ must be infinite as $G$ is level-transitive. Then we get
$$\lim_{k\to \infty}\abs{\pi_{k-n}(\RiSt_G(n)_{v_0})}=\lim_{k\to \infty}\abs{\pi_{k-n}(\mathrm{rist}_G(v_0)_{v_0})}=\lim_{k\to\infty}|\pi_k(\mathrm{rist}_G(v))|=\infty,$$
where the first and the second equalities follow from 
\cref{lemma: properties of rists}\textcolor{teal}{(i)} and \textcolor{teal}{(iii)} respectively and the third one from $\mathrm{rist}_G(v_0)$ being closed in $G$. Hence
\begin{align*}
\lim_{k\to \infty} \mu(\pi_k^{-1}(P_{r,n}(k))&=0.\qedhere
\end{align*}
\end{proof}

We finally prove \cref{Corollary: onto branch groups}. We recall the statement: If $G$ is a compact topological group admitting a branch pro-$p$ quotient, then the Haar measure of the set of its torsion elements has measure zero.

\begin{proof}[Proof of \cref{Corollary: onto branch groups}]
Let $H$ be a branch pro-$p$ group with Haar measure $\nu$ and $\varphi: G \rightarrow H$ a continuous epimorphism. Since the Haar measure is unique, then $\nu(A) = \mu(\varphi^{-1}(A))$ for all measurable sets $A \subseteq H$. In particular, if $F$ denotes the set of torsion elements of $G$ and $F'$ the set of torsion elements of $H$, we have $F \subseteq \varphi^{-1}(F')$ and consequently $$0 = \nu(F') = \mu(\varphi^{-1}(F')) \geq \mu(F),$$ and therefore $\mu(F) = 0$.
\end{proof}

\section{A $p$-adic analytic counterexample}
\label{sec_Example}

In this section, we show that is not true that topologically generated pro-$p$ groups will have a set of torsion elements with measure equal to $0$ necessarily. The following example in the case $p = 2$ was suggested by Grigorchuk who was informed by Zelmanov and Shumyatsky. Here, we extend this example to an arbitrary prime number $p$ and realize it as a group acting faithfully and level-transitively on the $p$-adic tree.

\subsection{The construction of $G$}
Let $\xi$ be a primitive $p$-th root of unity, i.e. a zero of the polynomial $x^{p-1}+\dotsb+x+1$. Then for every $1\le y\le p-1$ the number $\xi^y$ is also a primitive $p$-th root of unity and thus
\begin{equation}
\sum_{k = 0}^{p-1} \xi^{yk} = 0.
\label{ec_p-root_of_unity}
\end{equation}

Denote with $\ZZ_p$ the $p$-adic integers and $A := \ZZ_p[\xi]$ the ring extension of $\ZZ_p$ with~$\xi$. Define $G: = A \rtimes_\varphi \mathbb{Z}/p\mathbb{Z}$, where $\varphi: \mathbb{Z}/p\mathbb{Z} \rightarrow \Aut(\ZZ_p)$ is given by $1 \mapsto (x \mapsto \xi x)$. 

The group $A$ is isomorphic as an additive group to the direct product
$$\ZZ_p \times \overset{p}{\dotsb}\times \ZZ_p$$ 
and therefore both $A$ and $G$ are topologically finitely generated pro-$p$ groups. Moreover since $A$ is a powerful open subgroup of $G$, then $G$ is of finite rank and $p$-adic analytic by \cref{proposition: finite rank}.

We shall use the notation $G=\{(x,y):x\in A, y\in \mathbb{Z}/p\mathbb{Z}\}$ for semidirect products. In this notation the product in $G$ is simply
$$(x,y)\cdot(z,w) = (x + \varphi_y(z), y+w). $$

We now prove that the set  $F$ of torsion elements of $G$ is given by
$$F=\{(0,0)\}\cup (G\setminus A)\subset G.$$

By induction on $n\ge 1$, we obtain
$$(x,y)^n = (x + x\xi^y + \dotsb + x\xi^{y(n-1)}, ny).$$

Then, if $y \neq 0$, we get
$$(x,y)^p = \left( x\sum_{k = 0}^{p-1} \xi^{yk}, py \right) = (0,0)$$
by \cref{ec_p-root_of_unity}. On the other hand, if $y = 0$, then
$$(x,y)^n = (nx, 0),$$
which is zero only if $x = 0$ as $\ZZ_p$ is torsion-free as an additive group. 

As $G$ is an infinite compact group, we have $\mu(\set{(0,0)}) = 0$ and since $|G: A| =p$, we have
$$\mu(F) =\mu\left(G\setminus A\right)=1-\mu(A)=1-\frac{1}{p}=\frac{p-1}{p}.$$

\subsection{Realization of $G$ in the $p$-adic tree}
The group $G$ can be realized in the $p$-adic tree $T$ as follows. Let $\sigma = (1\,\dotsb\,p)$ be the cyclic permutation in $\Sym(p)$, and define the elements $a,g,h \in \Aut~T$ as 
$$a = (1,\dotsc,1)\sigma,\quad g = (1,\dotsc,1,g)\sigma \text{\quad and\quad} h = (1,\dotsc,1,g).$$
Since $g^p = (g,\dotsc,g)$, then $o(g) = \infty$ and consequently $o(h) = \infty$. On the other hand, $o(a) = p$. Notice that if we denote $h_i = a^iha^{-i}$, then $h_i$ and $h_j$ commute for all $i,j = 1,\dotsc,p$ and so $H := \<h>^{\langle a,h\rangle} = \<h_1,\dotsc,h_p> \lhd \<a,h>$ and isomorphic to $\ZZ\times\overset{p}{\dotsb}\times \ZZ$. Then $\<a,h>$ is isomorphic to $H\rtimes \langle a\rangle$ and thus the closure $\overline{\langle a,h\rangle}$ is isomorphic to~$G$.

Finally, since $G$ is of finite rank it cannot be branch (nor weakly branch). Indeed, if $\mathrm{RiSt}_G(n)\ne 1$ for every $n\ge 1$ then by \cref{align: rist is conjugate} the minimal number of generators of $\mathrm{RiSt}_G(n)$ is $p^n$ and thus $G$ cannot be of finite rank.

% We finally prove that $\RiSt_G(2) = 1$. Let $z \in \mathrm{rist}_G(11)$, the most left vertex in the second level. Then, there exists a sequence $\set{z_n}_{n \in \NN} \subseteq \<a,h>$ such that $z_n \rightarrow z$. An arbitrary element $z_n$ has the form $z_n = h_1^{x_1}...h_p^{x_p}a^y$ with $x_1,...,x_p \in \ZZ$ and $y \in \set{0,...,p-1}$. Pick $m \in \NN$ with $m \geq 2$ and take $n$ big enough such that $\pi_m(z_n) = \pi_m(z)$. Then, $z_n \in \St_G(m)$, so $y = 0$ and $z_n = (g^{x_1},...,g^{x_p})$. But since $z_n \in \St_G(2)$, then $x_1,...,x_p \equiv 0 \pmod{p}$, i.e., $x_i = px_i'$ and so $$z_n = (\underbrace{g^{x_1'},...,g^{x_1'}}_{\text{$p$ times}},...,\underbrace{g^{x_p'},...,g^{x_p'}}_{\text{$p$ times}})_2.$$

% Now, $z_n$ and $z$ coincide in the first $m$ levels, so $x_i \equiv 0 \pmod{p^m}$ and this for all $m$, so $z_n \rightarrow 1$ and consequently $z = 1$. Since $G$ is level-transitive then $\RiSt_G(2) = 1$ as we claimed.

% This proves that the group is neither branch nor weakly branch. 

% End of document

% References

% \bibliographystyle{abbrv}
% \bibliography{IEEEabrv,cites}

\bibliographystyle{unsrt}

\end{document}